\documentclass{article}
\usepackage[utf8]{inputenc}
\usepackage{amsmath,amsfonts,amsthm,amssymb}
\usepackage{graphicx}
\usepackage[all]{xy}

\newtheorem{lm}{Lemma}
\newtheorem{theorem}{Theorem}
\newtheorem{proposition}{Proposition}
\newtheorem{corollary}{Corollary}
\theoremstyle{definition}
\newtheorem{definition}{Definition}
\newtheorem{remark}{Remark}
\newtheorem{example}{Example}

\title{A generalisation of the Burau representation and groups $G_{n}^{3}$ for classical braids}

\author{Vassily Olegovich Manturov, Igor Mikhailovich Nikonov}
\date{}

\newcommand{\Z}{{\mathbb Z}}
\newcommand{\R}{{\mathbb R}}

\begin{document}

\maketitle

\abstract{We consider a certain modification of the group $G^3_n$ which describes dynamics of point configurations, in particular braids, and define a representation of the modified $G^3_n$. The braid representation induced is powerful enough to detect the kernel of the Burau representation.}

{Keywords: braid, $G^k_n$-group, Burau representation}

 {MSC2020: 57K10, 57K12, 20F36}

\section{Introduction}

In \cite{Ma} the first named author introduced a family of groups $G_{n}^{k}$ for two positive integers $n,k$ and formulated the following principle:

{\em If dynamical systems describing a motion of $n$ particles has a nice codimension $1$ property governed exactly by $k$
particles then these dynamical systems have a topological invariant valued in $G_{n}^{k}$.}

The first main example is an invariant of braids valued in $G_{n}^{3}$. 
Most of examples constructed over the last 10 years describe invariants of fundamental groups of configuration spaces
of $n$ points, which can be considered as ``braid-like objects'' that means an object with the number of particles fixed.

In this paper, we deal with a certain modification of the group $G^3_n$ which still describes dynamics of point configurations, in particular, braids. We define a representation of the modified group $\hat G^3_n$. The braid representation induced by it is powerful enough to detect the kernel of the Burau representation. 

\subsection{Acknowledgements}
The authors are grateful to Liu Yangzhou for valuable remarks. 

The first author was supported by the grant 25-21-00884 Applied combinatorial geometry and topology.

\section{A generalisation of the Burau representation}

Artin's presentation of the braid group $B_n$ has $n-1$ generators $\sigma_1,\dots,\sigma_{n-1}$ and the relations
\begin{itemize}
    \item (far commutativity) $\sigma_i\sigma_j=\sigma_j\sigma_i$ for any $|i-j|>1$;
    \item (Artin relations) $\sigma_i\sigma_{i+1}\sigma_i=\sigma_{i+1}\sigma_i\sigma_{i+1}$ for $i=1,\dots,n-2$.
\end{itemize}

Denote the natural homomorphism from $B_n$ to the permutation group $\Sigma_{n}$ on the set $\{1,\dots,n\}$ by $\rho$:
\begin{equation}\label{eq:map_from_braids_to_permutations}
\rho(\sigma_i)=(i\ i+1),\quad 1\le i\le n-1.
\end{equation}

The kernel $PB_n=\ker\rho$ is called the \emph{pure braid group} on $n$ strands.

Consider a modification of the groups $G^k_n$ introduced in~\cite[Section 2.3]{FKM}.

\begin{definition}
    The group $\hat G^3_n$ is the group with generators $a_{ijk}$ where $i,j,k$ are different numbers in $\{1,\dots,n\}$, and relations
\begin{enumerate}
    \item $a_{kji}=a_{ijk}^{-1}$;
    \item $a_{ijk}a_{pqr}=a_{pqr}a_{ijk}$ if there are at least 5 different numbers among $i,j,k,p,q,r$;
    \item $a_{ijk}a_{ijl}a_{ikl}a_{jkl}=a_{jkl}a_{ikl}a_{ijl}a_{ijk}$.
\end{enumerate}
\end{definition}

The permutation group $\Sigma_n$ acts on $\hat G^3_n$ by renumbering the indices:
\begin{equation}\label{eq:permutation_action_Gkn}
\tau \left(\prod_p a_{ijk}\right)=\prod_p a_{\tau(i)\tau(j)\tau(k)},\quad \tau\in \Sigma_n.
\end{equation}
This action defines a group structure on $\Sigma_n\ltimes \hat G^3_n$ by the formula
\begin{equation}\label{eq:product_Gkn_permutation}
(\sigma_1, \omega_1)\cdot(\sigma_2,\omega_2)=(\sigma_1\sigma_2, \sigma_2(\omega_1)\omega_2),\quad (\sigma_i,\omega_i)\in \Sigma_n\ltimes \hat G^3_n.
\end{equation}

\begin{figure}[h]
\centering\includegraphics[width=0.3\textwidth]{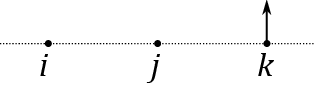}
\caption{The event $a_{ijk}$}
\label{fig:aijk}
\end{figure}

In~\cite{MN}, a homomorphism from $PB_n$ to $G^3_n$ was constructed. To do that, one looks at the braids as loops in the configuration space of $n$-point subsets of the plane. More concretely, a braid $\beta$ is given by a continuous map $x(t)=(x_1(t),...,x_n(t))$, $t\in[0,1]$, where $\{x_1(t),...,x_n(t)\}$ is an $n$-point subset of $\R^2$ for any $t$, and $\{x_1(0),...,x_n(0)\}=\{x_1(1),...,x_n(1)\}\subset S^1$. The corresponding element $\phi_n(\beta)$ of $G^3_n$ is constructed as follows: let $0<t_1<\cdots<t_p<1$ be the moments when three points in the subset $x(t)$ become collinear and let $x_{i_r}(t_r), x_{j_r}(t_r), x_{k_r}(t_r)$ be those points for the moment $t_r$, $r=1,\dots,p$. Then
\[
\phi_n(\beta)=\prod_{r=1}^p a_{i_rj_rk_r}.
\]

The homomorphism $\phi_n\colon PB_n\to G^3_n$ can be generalised to a homomorphism from $B_n$ to $\Sigma_n\ltimes \hat G^3_n$.

\begin{proposition}
The map $\phi_n$ whose values in the generators $\sigma_i$, $i=1,\dots,n-1$, are
\begin{equation}\label{eq:map_from_braid_to_Gkn}
\phi_n(\sigma_i)=\left(\rho(\sigma_i), a_{i-1,i+1,i}a_{i-2,i+1,i}\cdots a_{1,i+1,i}\cdot a_{n,i+1,i}\cdots a_{i+2,i+1,i}\right).
\end{equation}
induces a well defined homomorphism $\phi_n\colon B_n\to\Sigma_n\ltimes \hat G^3_n$.
\end{proposition}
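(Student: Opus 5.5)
The plan is to check that the proposed images of the generators satisfy Artin's two relations in $\Sigma_n\ltimes\hat G^3_n$. Then $\phi_n$ extends from the free group on $\sigma_1,\dots,\sigma_{n-1}$ to $B_n$. Write $\phi_n(\sigma_i)=(s_i,A_i)$, where $s_i=(i\ i+1)$ and $A_i=\prod_k a_{k,i+1,i}$. The product runs over $k\ne i,i+1$ in the cyclic order $i-1,i-2,\dots,1,n,n-1,\dots,i+2$.

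By the multiplication rule \eqref{eq:product_Gkn_permutation}, the two relations become the following identities in $\hat G^3_n$:
\begin{gather*}
s_j(A_i)\,A_j=s_i(A_j)\,A_i\quad (|i-j|>1),\\
s_{i+1}s_i(A_i)\; s_{i+1}(A_{i+1})\; A_i=s_is_{i+1}(A_{i+1})\; s_i(A_i)\; A_{i+1}.
\end{gather*}
The action of $s_j$ only renumbers indices, so each side is an explicit word in the generators $a_{pqr}$. The strategy is to reorder these words using relation 2 of the definition of $\hat G^3_n$ (commutation when at least 5 distinct indices occur). The goal is to group together the letters that involve a fixed "external" index $k$. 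Those groups are then compared using relations 1 and 3.

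\emph{Far commutativity.} A letter $a_{k,i+1,i}$ and a letter $a_{l,j+1,j}$ commute by relation 2 unless their index sets share two elements. This happens only when $k\in\{j,j+1\}$ or $l\in\{i,i+1\}$. For the remaining letters it also happens when $k=l$, but then the two letters share only the one index $k$ and the index sets $\{k,i,i+1\}$, $\{k,j,j+1\}$ give 5 distinct indices, so they commute as well. Thus the only non-commuting letters are those supported on the four indices $\{i,i+1,j,j+1\}$. After the renumbering by $s_j$ or $s_i$, each side contains exactly four such letters, namely $a_{j,\cdot,\cdot},a_{j+1,\cdot,\cdot}$ from one factor and $a_{i,\cdot,\cdot},a_{i+1,\cdot,\cdot}$ from the other. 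I expect these four letters to appear in the orders prescribed by the two sides of relation 3 for the quadruple $\{i,i+1,j,j+1\}$, possibly after inverting letters via relation 1. Everything else can be moved past these four letters. The check therefore splits into bookkeeping of the cyclic orderings, which differ according to whether $j<i$ or $j>i$, plus one application of relation 3.

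\emph{Artin relation.} The same grouping applies with the triple $i,i+1,i+2$. Letters $a_{k,\cdot,\cdot}$ and $a_{l,\cdot,\cdot}$ with distinct external indices $k\ne l\notin\{i,i+1,i+2\}$ involve 5 distinct indices, so they commute. Hence each side factors as a product over external $k$ of three letters on $\{k,i,i+1,i+2\}$. In addition, the internal letters $a_{i+2,\cdot,\cdot}$ and $a_{i,\cdot,\cdot}$, coming from $A_i$ and $A_{i+1}$, interleave with these blocks. The task is to show that the blocks can be moved past one another in the cyclic order, one external $k$ at a time. Each such move, together with the passage of the single internal "triangle" letter on $\{i,i+1,i+2\}$, should be exactly an instance of relation 3 on the quadruple $\{k,i,i+1,i+2\}$.

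I expect the main obstacle to be this last step: tracking precisely where the internal letters sit in the cyclic order on each side, and confirming that the local rewriting is literally relation 3 and not some variant of it. Note that $\hat G^3_n$ lacks the commutation of letters sharing two indices, so no shortcut is available there. As a guide, I would use the geometric picture from \cite{MN}. Here $\sigma_i$ is the motion in which $x_i,x_{i+1}$ swap near the circle, and $A_i$ lists the collinearity events with the other points in the order they are crossed. Both sides of each braid relation are isotopic motions, and the relations of $\hat G^3_n$ are exactly the codimension-2 events in such an isotopy. This tells me which instance of relation 3 to expect at each step of the algebraic check.
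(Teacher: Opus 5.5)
Your route is different from the paper's. The paper only invokes the geometric principle that generators are codimension-one events and relations are codimension-two events, referring to [Ma, Theorem~2]. A direct check of Artin's relations in $\Sigma_n\ltimes\hat G^3_n$ is viable, and your far-commutativity analysis is sound: after clearing the commuting letters it is exactly relation~3 for $(i,i+1,j+1,j)$, rewritten with relation~1.

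The braid relation, however, contains two concrete errors.

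\textbf{First error: the expansion.} Left-associating with the product rule gives
\[
s_i(s_{i+1}(A_i))\, s_i(A_{i+1})\, A_i = s_{i+1}(s_i(A_{i+1}))\, s_{i+1}(A_i)\, A_{i+1},
\]
so the middle factors are $s_i(A_{i+1})$ and $s_{i+1}(A_i)$, not $s_{i+1}(A_{i+1})$ and $s_i(A_i)$. Your displayed identity fails already for $n=3$. There relations~2 and~3 are vacuous, so $\hat G^3_3$ is free on $a_{123},a_{132},a_{213}$. The correct identity reads $a_{132}a_{231}a_{321}=a_{321}a_{231}a_{132}$, which holds because $a_{231}=a_{132}^{-1}$.

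\textbf{Second error: a false commutation.} It is not true that letters with distinct external indices commute. The letters $a_{k,i+2,i+1}$ and $a_{l,i+2,i+1}$ involve only the four indices $k,l,i+1,i+2$, so relation~2 does not apply. They genuinely fail to commute: the paper's endomorphisms $\rho_{k,i+2,i+1}$ and $\rho_{l,i+2,i+1}$ give different results on $x_{i+1,i+2}$ in the two orders. So the blocks cannot be moved past one another, and the step you flag as the main obstacle would fail as planned.

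\textbf{What actually happens.} Let $m=n-3$ and let $\kappa_1,\dots,\kappa_m$ be the external indices in the order $i-1,\dots,1,n,\dots,i+3$; this is the common order in $A_i$ and $A_{i+1}$. Put $X_r=a_{\kappa_r,i+2,i+1}$, $Y_r=a_{\kappa_r,i+2,i}$, $Z_r=a_{\kappa_r,i+1,i}$ and $t=a_{i+2,i+1,i}$. Then $A_i=Z_1\cdots Z_m\,t$ and $A_{i+1}=a_{i,i+2,i+1}X_1\cdots X_m$. In the corrected identity the internal letters $a_{i,i+2,i+1}$ and $a_{i+1,i+2,i}$ become adjacent on each side and cancel by relation~1. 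This leaves
\[
(X_1\cdots X_m)(Y_1\cdots Y_m)(Z_1\cdots Z_m)\,t=t\,(Z_1\cdots Z_m)(Y_1\cdots Y_m)(X_1\cdots X_m).
\]
Letters of different types with different $r$ involve five indices and commute. So the left side regroups as $\prod_r X_rY_rZ_r$ followed by $t$, and the right side as $t$ followed by $\prod_r Z_rY_rX_r$, without ever swapping two letters of the same type. Now push $t$ leftwards through the blocks $r=m,\dots,1$, using relation~3 for $(\kappa_r,i+2,i+1,i)$, namely $X_rY_rZ_rt=tZ_rY_rX_r$.

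The order of the blocks never changes; only the single triangle letter travels. With these two corrections your algebraic argument becomes a complete proof, and a more self-contained one than the paper's appeal to the geometric codimension argument.
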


\begin{figure}[h]
\centering\includegraphics[width=0.3\textwidth]{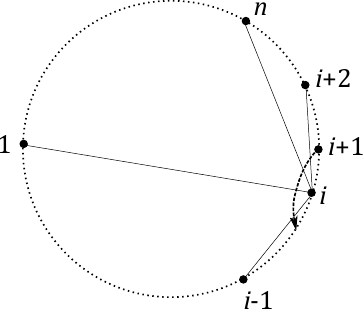}
\caption{Dynamics of the generator $\sigma_i$}
\label{fig:sigmadynamics}
\end{figure}

\begin{proof}
By considering the dynamics for the Artin generators (Fig.~\ref{fig:sigmadynamics}), we get the formula~\eqref{eq:map_from_braid_to_Gkn}. The correctedness of the map $\phi_n$ is proved analogous to~\cite[Theorem 2]{Ma}.
\end{proof}

It can be said that the proof of invariance follows from the fact that THE GENERATORS CORRESPOND TO CODIMENSION 1, and THE RELATIONS CORRESPOND TO CODIMENSION 2.

\begin{corollary}
    The map  $\phi_n$ induces a homomorphism from $PB_n$ to $\hat G^3_n$.
\end{corollary}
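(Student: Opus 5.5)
The corollary follows by restricting the homomorphism of the Proposition to the pure braid group and composing it with the projection onto the $\hat G^3_n$ coordinate. That projection is not a homomorphism on all of $\Sigma_n\ltimes \hat G^3_n$, so the plan is to show it becomes one on the subgroup $\{e\}\times\hat G^3_n$.

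\emph{Step 1.} Consider the projection $\pi_1\colon\Sigma_n\ltimes\hat G^3_n\to\Sigma_n$, $(\sigma,\omega)\mapsto\sigma$. By the product formula~\eqref{eq:product_Gkn_permutation}, $\pi_1$ is a homomorphism, since the first coordinate of a product is $\sigma_1\sigma_2$. The composite $\pi_1\circ\phi_n$ agrees with $\rho$ on the Artin generators by~\eqref{eq:map_from_braid_to_Gkn}. Both are homomorphisms $B_n\to\Sigma_n$, so $\pi_1\circ\phi_n=\rho$. Hence for $\beta\in PB_n=\ker\rho$ we get $\phi_n(\beta)=(e,\omega_\beta)$ for a unique $\omega_\beta\in\hat G^3_n$.

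\emph{Step 2.} The subgroup $\ker\pi_1=\{(e,\omega)\}$ is identified with $\hat G^3_n$. Indeed, by~\eqref{eq:product_Gkn_permutation},
\[
(e,\omega_1)(e,\omega_2)=(e,e(\omega_1)\omega_2)=(e,\omega_1\omega_2),
\]
because the identity permutation acts trivially by~\eqref{eq:permutation_action_Gkn}. So $\omega\mapsto(e,\omega)$ is an isomorphism onto $\ker\pi_1$.

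\emph{Step 3.} The map $\beta\mapsto\omega_\beta$ is the composite of the homomorphism $\phi_n|_{PB_n}\colon PB_n\to\ker\pi_1$ with the inverse of the isomorphism from Step 2. It is therefore a homomorphism $PB_n\to\hat G^3_n$.

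There is no real obstacle beyond well-definedness of $\phi_n$, which is supplied by the Proposition. The one point to handle carefully is Step 2: the naive projection to $\hat G^3_n$ fails to be multiplicative in general, because of the twist $\sigma_2(\omega_1)$, and this twist disappears only when the permutation component is trivial.
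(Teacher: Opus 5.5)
Your proposal is correct and follows the same route as the paper: since $\rho(\beta)=1$ for $\beta\in PB_n$, the image $\phi_n(PB_n)$ lies in the subgroup $1\times\hat G^3_n\cong\hat G^3_n$. You additionally verify the two facts the paper leaves implicit, namely that the $\Sigma_n$-coordinate of $\phi_n$ equals $\rho$ and that the twist $\sigma_2(\omega_1)$ disappears on $\{e\}\times\hat G^3_n$.
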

\begin{proof}
 For any $\beta\in PB_n$, we have $\rho(\beta)=1$. Then $\phi_n$ maps $PB_n$ in the subgroup $1\times \hat G^3_n\subset \Sigma_n\ltimes \hat G^3_n$ which is isomorphic to $\hat G^3_n$.
\end{proof}

Let $A=\Z[t_1^\pm,\dots,t_n^\pm,s_1^\pm,\dots,s_n^\pm]$. Consider a free module 
\[
V=A\langle x_{ij}\mid i,j\in\{1,\dots,n\}, i\ne j\rangle
\]
of rank $n(n-1)$. Define an endomorphism $\rho_{ijk}\in End_A(V)$ using the formula
\[
\rho_{ijk}(x_{pq})=\left\{\begin{array}{cl}
    t_ix_{ij}+(1-t_i)x_{ik}, & (p,q)=(i,j), \\
    t_k^{-1}x_{kj}+(1-t_k^{-1})x_{ki}, & (p,q)=(k,j), \\
    s_j x_{jk}, & (p,q)=(j,k),\\
    s_j^{-1} x_{ji}, & (p,q)=(j,i),\\
    x_{pq}, & \mbox{otherwise}.
\end{array}\right.
\]

\begin{theorem}
    The map $a_{ijk}\mapsto \rho_{ijk}$ defines a representation of the group $\hat G^3_n$ on $V$.
\end{theorem}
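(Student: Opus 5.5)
The plan is to verify directly that the endomorphisms $\rho_{ijk}$ are invertible and satisfy the three defining relations of $\hat G^3_n$. The key structural observation is locality. $\rho_{ijk}$ changes only the four basis vectors $x_{ij},x_{kj},x_{jk},x_{ji}$, all of whose indices lie in $\{i,j,k\}$. Each image is a combination of vectors $x_{pq}$ with the \emph{same first index} $p$ and with $q\in\{i,j,k\}$. Hence $\rho_{ijk}$ preserves the span $V_{ijk}$ of $\{x_{pq}: p,q\in\{i,j,k\}\}$ and is the identity on the complementary basis vectors. More finely, for each $p$ it preserves the "row" $\langle x_{pq}\rangle_q$.

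\emph{Relation 1.} I will compute $\rho_{kji}\circ\rho_{ijk}$ on the four moved vectors. For instance,
\[
x_{ij}\mapsto t_ix_{ij}+(1-t_i)x_{ik}\mapsto t_i\bigl(t_i^{-1}x_{ij}+(1-t_i^{-1})x_{ik}\bigr)+(1-t_i)x_{ik}=x_{ij},
\]
since $x_{ik}$ is fixed by both maps. The case of $x_{kj}$ is symmetric. On $x_{jk}$ and $x_{ji}$ the factors $s_j$ and $s_j^{-1}$ cancel. The same computation in the other order shows that $\rho_{kji}=\rho_{ijk}^{-1}$. In particular every $\rho_{ijk}$ lies in $GL_A(V)$, so the assignment makes sense as a map into a group.

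\emph{Relation 2.} If $\{i,j,k,p,q,r\}$ has at least five elements, then the triples $\{i,j,k\}$ and $\{p,q,r\}$ share at most one index. Therefore no ordered pair of distinct indices lies in both triples. Consequently $\rho_{ijk}$ and $\rho_{pqr}$ act nontrivially on disjoint sets of basis vectors. Each map preserves the span of the other's moved vectors and is the identity there, so the two maps commute.

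\emph{Relation 3 (tetrahedron).} This is the main step. All four maps preserve the $12$-dimensional span of $x_{pq}$ with $p,q\in\{i,j,k,l\}$ and fix everything else. Each map also preserves the rows by first index, so the check splits into four $3\times3$ identities, one for each $p\in\{i,j,k,l\}$.
\begin{itemize}
\item For row $i$, the map $\rho_{jkl}$ acts trivially. The other three maps are Burau-type unitriangular moves with parameter $t_i$: $x_{ij}\to x_{ik}$ for $\rho_{ijk}$, $x_{ij}\to x_{il}$ for $\rho_{ijl}$, and $x_{ik}\to x_{il}$ for $\rho_{ikl}$. The identity reduces to an explicit $3\times3$ matrix computation.
\item For row $j$, the maps $\rho_{ijk}$ and $\rho_{ijl}$ give diagonal $s_j^{\pm1}$ scalings, $\rho_{jkl}$ gives a $t_j$-move $x_{jk}\to x_{jl}$, and $\rho_{ikl}$ acts trivially. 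One checks that the scalings commute past the move in the required order.
\end{itemize}
Rows $k$ and $l$ should follow from rows $j$ and $i$ by the index reversal $i\leftrightarrow l$, $j\leftrightarrow k$. Together with Relation 1, this reversal turns the tetrahedron relation into its inverse form. If the symmetry bookkeeping gets murky, I would simply do all four small checks.

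The main obstacle I expect is keeping the composition convention consistent. The semidirect product formula~\eqref{eq:product_Gkn_permutation} suggests that words should be read so that $a_{ijk}a_{ijl}$ corresponds to a fixed order of composing $\rho$'s, either as a left or as a right action. The $3\times3$ identities are sensitive to this choice in rows $j$ and $k$, where the $s$-scalings and $t$-moves interact. I would first fix the convention that $\rho_{ijk}$ acts on the right, so that words act letter by letter from the left, and verify row $j$ explicitly. If that check fails, I would switch to the other convention.
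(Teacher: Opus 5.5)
Your proposal is correct and follows the paper's route: relations 1 and 2 come from the locality of the $\rho_{ijk}$, and relation 3 is checked directly on a single four-index block. You organise that check more carefully than the paper: the splitting into four $3\times 3$ row identities is valid, the reduction of rows $k,l$ to rows $i,j$ via $i\leftrightarrow l$, $j\leftrightarrow k$ together with relation 1 is valid, and the row identities do hold.

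Two of your remarks need correcting.
\begin{itemize}
\item In row $j$ the key fact is that $\rho_{ijk}\rho_{ijl}$ acts as the scalar $s_j$ on $\langle x_{jk},x_{jl}\rangle$. That is why it commutes with the $t_j$-move, even though neither scaling commutes with the move on its own.
\item Your worry about the composition convention is unfounded. All three defining relations, in particular $a_{ijk}a_{ijl}a_{ikl}a_{jkl}=a_{jkl}a_{ikl}a_{ijl}a_{ijk}$, turn into themselves (with the two sides swapped) when every word is read backwards. So both conventions give literally the same matrix identity, and switching conventions could never rescue a failed check.
\end{itemize}
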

\begin{proof}
    The relations 1 and 2 follow from the definition. The correctness of the relation 3 is verified directly (it is enough to consider the case $(i,j,k,l)=(1,2,3,4)$).
\end{proof}

\begin{corollary}
    The composition $\rho\circ\phi_n\colon PB_n\to End_A(V)$ is a representation of the pure braid group on the free module $V$.
\end{corollary}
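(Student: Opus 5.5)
The corollary follows by composing two homomorphisms already in hand. By the Corollary to the Proposition, $\phi_n$ restricts to a homomorphism $PB_n\to\hat G^3_n$. By the Theorem, $a_{ijk}\mapsto\rho_{ijk}$ extends to a homomorphism $\hat G^3_n\to End_A(V)$. The composition is then multiplicative. Here $\rho\circ\phi_n$ denotes this representation of $\hat G^3_n$ applied after $\phi_n|_{PB_n}$, not the permutation map $\rho$ of \eqref{eq:map_from_braids_to_permutations}.

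The steps, in order, are as follows.

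\begin{enumerate}
\item Take $\beta\in PB_n$. Write $\phi_n(\beta)=(1,w_\beta)$ with $w_\beta\in\hat G^3_n$; this is possible because $\rho(\beta)=1$. By the product formula \eqref{eq:product_Gkn_permutation}, for pure braids $\beta,\gamma$ we get $(1,w_\beta)(1,w_\gamma)=(1,w_\beta w_\gamma)$, so $\beta\mapsto w_\beta$ is a homomorphism.
\item Write $w_\beta$ as a word in the $a_{ijk}$ and send it to the corresponding product of the $\rho_{ijk}$. This image is independent of the chosen word precisely because the Theorem says all defining relations of $\hat G^3_n$ are respected.
\item Check that the image lies in the automorphisms of $V$, not merely the endomorphisms, so that it is a genuine representation. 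Relation 1 gives $\rho_{kji}=\rho_{ijk}^{-1}$. Alternatively, each $\rho_{ijk}$ is invertible directly: in a suitable ordered basis it is triangular with unit-group diagonal entries $t_i$, $t_k^{-1}$, $s_j^{\pm1}$.
\end{enumerate}

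The only point needing care is the convention for composing endomorphisms versus multiplying in $\hat G^3_n$. The semidirect product uses the twisted rule $\sigma_2(\omega_1)\omega_2$, so one must confirm that the order of factors yields a homomorphism rather than an anti-homomorphism. If it comes out reversed, I would compose with inversion, or use the transpose action on the dual basis, to obtain a left representation. Since the twisting by $\sigma$ vanishes on $PB_n$, this reduces to choosing the action convention consistently with the Theorem.
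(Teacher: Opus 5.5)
Your proposal is correct and follows the route the paper intends; the paper gives no separate proof. You restrict $\phi_n$ to $PB_n$, where it lands in $1\times\hat G^3_n\cong\hat G^3_n$ by the preceding Corollary, and compose with the representation $a_{ijk}\mapsto\rho_{ijk}$ of the Theorem. Your worry about the composition convention resolves itself: all three defining relations of $\hat G^3_n$ are invariant under reversing words, so the relations are respected whichever order you compose the $\rho_{ijk}$ in, and you may simply choose the order that makes the map multiplicative.
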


\begin{example}
Let $n=5$ and $\beta\in PB_5$ be the element in the kernel of the Burau representation considered in~\cite{Big}. The braid $\beta$ is defined by the formula 
\[
\beta = [\psi_1^{-1}\sigma_4\psi_1,\psi_2^{-1}\sigma_4\sigma_3\sigma_2\sigma_1^2\sigma_2\sigma_3\sigma_4\psi_2],
\]
where $\psi_1=\sigma_3^{-1}\sigma_2\sigma_1^2\sigma_2\sigma_4^3\sigma_3\sigma_2$ and $\psi_2=\sigma_4^{-1}\sigma_3\sigma_2\sigma_1^{-2}\sigma_2\sigma_1^2\sigma_2^2\sigma_1\sigma_4^5$. Computer calculations show that the matrix $\rho\circ\psi_5(\beta)\in M_{20}(A)$ is not the identity. In particular, when $t_1=-1$ and $t_2=\cdots=t_5=s_1=\cdots=s_5=1$, the corner matrix element is
\[
\langle x_{12}|\rho\circ\psi_5(\beta)|x_{12}\rangle=-399.
\]
Thus, the constructed representation is not weaker than the Burau representation.
\end{example}

\begin{example}
    Consider the braid $\beta$ of the previous example as an element in the group $PB_6$. Liu Yangzhou~\cite{Yangzhou} proved that $\beta$ lies in the kernel of representation $\tilde\rho\circ\psi\circ f_d\circ p_6$, $d\ge 2$, considered in~\cite{MN23}. On the other hand, the matrix $\rho\circ\psi_6(\beta)\in M_{30}(A)$ is not the identity when $t_1=s_1=-1$ and $t_2=\cdots=t_6=s_2=\cdots=s_6=1$ (the corner matrix element is $\langle x_{12}|\rho\circ\psi_6(\beta)|x_{12}\rangle=-399$). Hence, the representation $\rho\circ\psi_6$ is stronger than the representation $\tilde\rho\circ\psi\circ f_d\circ p_6$.
\end{example}

\begin{remark}
    By specializing the values of the parameters $t_i,s_i$ in $\mathbb R$, we get a representation of the group $\hat G^3_n$ in the space $V_{\mathbb R}=\mathbb R\langle x_{ij}\mid i,j\in\{1,\dots,n\}, i\ne j\rangle$.  
    
    On the other hand, the space $V_{\mathbb R}$ splits into a direct sum of the subspaces $V_{sym}=\mathbb R\langle \frac{x_{ij}+x_{ji}}2\mid i\ne j\rangle$ and $V_{alt}=\mathbb R\langle \frac{x_{ij}-x_{ji}}2\mid i\ne j\rangle$. In~\cite[Section 2.3]{FKM}, several representations of $\hat G^3_n$ on $V_{sym}$ were defined. One can extend those representations to $V_{\mathbb R}$ by fixing a representation of $\hat G^3_n$ on $V_{alt}$ (for example, the trivial one). As can be seen, the representations obtained are different from those induced by the specializations of $\rho$. This indicates the existence of a more general representation that would encompass all of the above mentioned cases, but this representation has yet to be found. 
\end{remark}

\end{document}